\newtheorem{THEO}{Theorem}
\newtheorem{LEMM}[THEO]{Lemma}
\newtheorem{DEFI}[THEO]{Definition}
\newtheorem{QUES}[THEO]{Question}
\DeclarePairedDelimiterX{\norm}[1]{\lVert}{\rVert}{#1}
\newtheorem*{THEO*}{Theorem}
\newtheorem*{DEFI*}{Definition}
\def\({\left(}
\def\){\right)}
\def\N{\mathbb{ N}}
\newcommand{\sub}{\subseteq}
\def\CC{\mathcal{C}}
\def\EE{\mathcal{E}}
\def\FF{\mathcal{F}}
\def\1{\textbf{1}}
\def\co{\operatorname{co}}
\newcommand{\Suc}[2]{\ensuremath{\left({#1}\right)_{{#2}=1}^\infty}}
\def\w{\omega}
\title[]{On the property (C) of Corson and other sequential properties of Banach Spaces}
\author[Mart\'inez-Cervantes]{Gonzalo Mart\'inez-Cervantes}
\address[Mart\'inez-Cervantes]{Universidad de Alicante, Departamento de Matem\'{a}ticas, Facultad de Ciencias, 03080 Alicante, Spain
	\newline
	\href{http://orcid.org/0000-0002-5927-5215}{ORCID: \texttt{0000-0002-5927-5215} } }	
\email{gonzalo.martinez@ua.es}
\author[Poveda]{Alejandro Poveda}
\address[Poveda]{Harvard University, Center of Mathematical Sciences and Applications, Cambridge, MA 02138, USA
	\newline
	\href{https://scholar.harvard.edu/apoveda/home}{Website: https://scholar.harvard.edu/apoveda/home} }
\email{alejandro@cmsa.fas.harvard.edu}
\subjclass[2010]{Primary 57N17, 54D55, 46A50; Secondary 46B20, 46B50.}
\keywords{Sequential, sequentially compact, countable tightness, property (C) of Corson,  Fr\'{e}chet-Urysohn, angelic, Efremov property, Banach space}
\thanks{The first author was partially supported by Fundaci\'{o}n S\'{e}neca - ACyT Regi\'{o}n de Murcia (grant 21955/PI/22) and by Agencia Estatal de Investigaci\'on and EDRF/FEDER ``A way of making Europe" (MCIN/AEI/10.13039/501100011033) (grant PID2021-122126NB-C32). The second author is funded by the Center of Mathematical Sciences and Applications at Harvard University.
}
\begin{document}

\begin{abstract}
A well-known result of R. Pol states that a Banach space $X$ has property ($\CC$) of Corson if and only if every point in the weak*-closure of any convex set $C \subseteq B_{X^*}$ is actually in the weak*-closure of a countable subset of $C$.
Nevertheless, it is %seems to be an open problem 
an open problem whether this is in turn equivalent to the countable tightness of $B_{X^*}$ with respect to the weak*-topology. Frankiewicz, Plebanek and Ryll-Nardzewski provided an affirmative answer under $\mathrm{MA}+\neg \mathrm{CH}$ for the class of $\CC(K)$-spaces. In this article we provide a partial extension of this latter result by showing that under the Proper Forcing Axiom  ($\mathrm{PFA}$) %the Proper Forcing Axiom ($\mathrm{PFA}$), 
the following conditions are equivalent for an arbitrary Banach space $X$:
\begin{enumerate}
	\item $X$ has property $\EE'$; %\footnote{I.e.,every weak*-sequentially closed convex subset of $B_{X^\ast}$ is weak*-closed)}
	\item $X$ has weak*-sequential dual ball; 
	\item $X$ has property ($\CC$) of Corson;
	\item $(B_{X^*},w^\ast)$ has countable tightness.
\end{enumerate}
This provides a partial extension of a former result of Arhangel'skii. %Arhangel?skii. 
In addition,  we show that every Banach space with property $\EE'$ has weak*-convex block compact dual ball. 
\end{abstract}

\maketitle

\section{Introduction}

A Banach space $X$ is said to have \emph{property ($\CC$) of Corson}\index{property ($\CC$) of Corson} if every family of closed convex subsets of $X$ with empty intersection contains a countable subfamily with empty intersection. R. Pol discovered that property ($\CC$) is a convex analogue of countable tightness:

\begin{THEO*}[Pol \cite{Pol80}]
	A Banach space $X$ has property $(\CC)$ if and only if every point in the weak*-closure of a convex set $C \sub B_{X^\ast}$ is in the weak*-closure of a countable subset of $C$.
\end{THEO*}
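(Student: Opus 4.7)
The plan is to prove each implication via Hahn-Banach separation: the forward direction by feeding $C$ into property $(\CC)$ through a family of half-spaces in $X$ parametrised by $C$, and the backward direction by dualising a failure of $(\CC)$ into a convex set in $B_{X^\ast}$ violating countable tightness.

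For the forward direction, assume property $(\CC)$, fix a convex $C \subseteq B_{X^\ast}$ and $x^\ast \in \overline{C}^{w^\ast}$, and translate so that $x^\ast = 0$. For each $c \in C$ set $G_c := \{x \in X : c(x) \geq 1\}$, a closed convex half-space in $X$. Since $0 \in \overline{C}^{w^\ast}$, for every $x \in X$ there exists $c \in C$ with $|c(x)| < 1$, so $\bigcap_{c \in C} G_c = \emptyset$. Property $(\CC)$ then yields a countable $D_0 \subseteq C$ with $\bigcap_{c \in D_0} G_c = \emptyset$, and replacing $D_0$ by its rational convex hull $D := \mathrm{co}_{\Q}(D_0) \subseteq C$ preserves both countability and emptiness of the intersection. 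Were $0 \notin \overline{D}^{w^\ast}$, then since $\overline{D}^{w^\ast}$ is weak*-closed and convex, Hahn-Banach would separate $0$ from it by a weak*-continuous linear functional on $X^\ast$, i.e.\ by some $x \in X$ satisfying (after rescaling) $c(x) \geq 1$ for every $c \in D$, contradicting $\bigcap_{c \in D} G_c = \emptyset$. Hence $0 \in \overline{D}^{w^\ast}$.

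For the backward direction, assume the convex countable tightness of $B_{X^\ast}$ and suppose toward a contradiction that $(\CC)$ fails via a family $\{F_\alpha : \alpha \in A\}$ of closed convex subsets of $X$ with $\bigcap_\alpha F_\alpha = \emptyset$ but every countable subfamily nonempty. Writing each $F_\alpha$ as the intersection of the closed affine half-spaces containing it, and noting that a countable subfamily of such half-spaces with empty intersection would already yield a countable subfamily of $\{F_\alpha\}$ with empty intersection, one may reduce to the case $F_\alpha = \{x \in X : y^\ast_\alpha(x) \geq t_\alpha\}$ with $y^\ast_\alpha \in X^\ast$ and $t_\alpha \in \R$. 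The strategy is then to encode this data as a convex $C \subseteq B_{X^\ast}$ together with a point $x^\ast_0 \in \overline{C}^{w^\ast}$ so that $x^\ast_0 \in \overline{D}^{w^\ast}$ for some countable $D \subseteq C$ would force a countable subfamily of $\{F_\alpha\}$ to have empty intersection.

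The principal obstacle is constructing the pair $(C, x^\ast_0)$ explicitly. Schematically, $C$ should be built from the normalised separators $\{y^\ast_\alpha\}$ after suitable rescaling into $B_{X^\ast}$, and $x^\ast_0$ should be a ``diagonal'' limit functional detecting the vanishing of $\bigcap_\alpha F_\alpha$; the condition $x^\ast_0 \in \overline{C}^{w^\ast}$ should translate via Hahn-Banach to the non-existence of a common point of all the $F_\alpha$'s, while $x^\ast_0 \notin \overline{D}^{w^\ast}$ for countable $D \subseteq C$ should correspond to the existence of a common point of the countable subfamily of $\{F_\alpha\}$ singled out by $D$. The model case $X = \ell^1(\omega_1)$ with $F_\alpha = \{x : x(\alpha) \geq 1\}$---in which $C$ is the set of $[0,1]$-valued finitely supported elements of $B_{\ell^\infty(\omega_1)}$ and $x^\ast_0 = \mathbf{1}$---fixes the local picture, but carrying out such a construction for general $X$ in the absence of coordinate structure is the main technical difficulty.
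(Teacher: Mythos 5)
First, a remark on the comparison itself: the paper does not prove this statement --- it is quoted verbatim from Pol's paper \cite{Pol80} --- so there is no in-paper proof to measure your argument against, and I evaluate it on its own merits. Your forward direction is correct and complete: the half-spaces $G_c=\{x\in X: c(x)\ge 1\}$, the observation that $0\in\overline{C}^{w^*}$ forces $\bigcap_{c\in C}G_c=\emptyset$, the passage to the rational convex hull so that $\overline{D}^{w^*}$ is convex, and the Hahn--Banach separation of $0$ from $\overline{D}^{w^*}$ all work as written.

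The backward direction, however, contains a genuine gap: you describe a strategy but explicitly leave the construction of the pair $(C,x_0^*)$ as ``the main technical difficulty'', and that construction \emph{is} the content of this implication. Concretely, after your reduction to half-spaces $F_\alpha=\{x: y^*_\alpha(x)\ge t_\alpha\}$ one wants to normalise to $F_\alpha=\{x: z^*_\alpha(x)\ge 1\}$ with the $z^*_\alpha$ uniformly bounded, take $C=\co\{z^*_\alpha\}$ (rescaled into $B_{X^*}$) and $x_0^*=0$, and exploit the duality that $0\in\overline{\co}^{w^*}\{z^*_\alpha:\alpha\in A'\}$ if and only if $\bigcap_{\alpha\in A'}F_\alpha=\emptyset$; your own conditions on $(C,x_0^*)$ are exactly this. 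The obstruction you do not address is the normalisation: the thresholds $t_\alpha$ need not be positive, $\|y^*_\alpha\|/t_\alpha$ need not be bounded, and $\bigcap_\alpha F_\alpha=\emptyset$ does not provide a single translation making all $t_\alpha>0$. The standard remedy, which is where the real work lies, is to first replace $\{F_\alpha\}$ by $\{F_\alpha\cap nB_X\}$ for a suitable $n$: if for every $n$ some countable subfamily had intersection disjoint from $nB_X$, the union of these countably many subfamilies would already be a countable subfamily with empty intersection, a contradiction; one then dualises the resulting uniformly bounded family. Finally, your model case is incorrect: for $X=\ell^1(\omega_1)$ and $F_\alpha=\{x: x(\alpha)\ge 1\}$, any infinite countable subfamily already has empty intersection (a common point would have infinite norm), so this family does not witness the failure of property $(\CC)$; a correct model is, e.g., $F_\alpha=\{x\ge 0:\ \sum x=1,\ x(\alpha)=0\}$.
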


Recall that a topological space $T$ is said to have \textit{countable tightness} if for every subspace $F$ of $T$, every point in the closure of $F$ is in the closure of a countable subspace of $F$. Thus, if the dual ball of a Banach space $X$ has countable tightness (with respect to the weak*-topology) then $X$ has property $(\CC)$ of Corson. A natural question is whether the reverse implication holds.
To the best of our knowledge, this question is open (see \cite{FPR00, PleSob15, KKLP}).

Both countable tightness and property $(\CC)$ of Corson are closely related to several classical sequential properties. Before diving into deeper waters let us fix the notation and state some of the topological notions under consideration.
All topological spaces considered in the manuscript are assumed to be Hausdorff. The symbol $w^\ast$ will be reserved to denote the weak* topology of a Banach space.
Recall that a topological space $T$ is said to be \textit{sequentially compact} if every sequence in $T$ contains a convergent subsequence; whereas it is said to be \textit{Fr\'{e}chet-Urysohn} (FU for short) if for every subspace $F$ of $T$, every point in the closure of $F$ is the limit of a sequence in $F$. Thus, every FU compact space is sequentially compact and has countable tightness.
For our purpose and to follow the standard terminology in the field, let us just say that a Banach space is said to have \textit{weak*-angelic dual} if it has weak*-FU dual ball.

\smallskip

In this article we shall pay particular attention to a property which is between being FU and having countable tightness.
A topological space $T$ is said to be \textit{sequential} if any sequentially closed subspace is closed. Recall that a set is said to be \textit{sequentially closed} if it coincides with its sequential closure, i.e. with the set of all limits of sequences in the set.
 It is immediate that every FU space is sequential and that every sequential compact space is sequentially compact.
It is a well-known fact that every sequential space has countable tightness. However, whether the converse implication holds for the class of compact spaces is known as the \emph{Moore-Mrowka Problem}, which is undecidable on the grounds of ZFC. On the one hand, Balogh proved in \cite{Ba88} that under the \emph{Proper Forcing Axiom} (PFA) a compact space has countable tightness if and only if it is sequential. On the other hand, Fedorchuk \cite{Fedorchuk77} provided a consistent example of a compact space with countable tightness with no convergent subsequences, hence non-sequentially compact. Later variations of Balogh's theorem not bearing on PFA (hence, bypassing the use of large cardinal) have been obtained by Dow \cite{Dow92} and, more recently, by Dow and Eisworth \cite{DE}.

Thereby for a compact space $K$ we have the following implications:\bigskip

\begin{center}
\begin{tikzpicture}[->,>=stealth',shorten >=1pt,auto,node distance=3.5cm,
thick,main node/.style={rectangle,
	rounded corners,
	draw=black,
	text width=7.1em,
	text centered}]

\node[main node] (FU) {$K$ is FU};
\node at (2,0) {$\Rightarrow$};

\node[main node] (sequential) at (4,0) {$K$ is sequential};

\node at (6,0) {$\Rightarrow$};

\node[style={rectangle,rounded corners,draw=black,text width=12.1em, text centered}] (seqcomp) at (9,0) {$K$ is sequentially compact};

\node at (4,-0.75) {$\Downarrow$};

\node[style={rectangle,rounded corners,draw=black,text width=12.1em, text centered}] (counttightness) at (4,-1.5) {$K$ has countable tightness};

\end{tikzpicture}
\end{center}

In what follows we focus on  convex analogues of the above-mentioned properties when the topological space is the dual ball of a Banach space endowed with the weak*-topology.  
A Banach space $X$ is said to have property $\EE$ if every point in the weak*-closure of any \emph{convex} subset $C \subseteq B_{X^\ast}$ is the weak*-limit of a sequence in $C$ \cite[p.~352]{pli-yos-2}  . %\ale{What's the difference between both properties?} 
%\gonza{They are analogous to property FU and sequentiality; in property $\EE$ you reach the closure of a convex set by taking limits of sequences, but in property $\EE'$ you might need to repeat several times this operation of taking limits of sequences to reach the closure of the set.}
In the same spirit we say that $X$ has \emph{property $\EE'$} if every weak*-sequentially closed \emph{convex} set in the dual ball is weak*-closed \cite{MC17}.
Thus, if $X$ has weak*-angelic dual then it has property $\EE$, which in turn implies property $\EE'$. Likewise, if $X$ has weak*-sequential dual ball then $X$ has property $\EE'$.

The last property considered in this article is a convex version of sequential compactness:

\begin{DEFI*}
	\label{DEFIconvexblockcompact}
	If $(x_n)_{n}$ is a sequence in a Banach space, we say that $(y_k)_{k\in\N}$ is a \textit{convex block subsequence}\index{convex block subsequence} of $(x_n)_{n\in\N}$ if there is a sequence $(I_k)_{k\in \N}$ of finite subsets of $\N$  with $\max(I_k)< \min(I_{k+1})$ and a sequence $a_n \in [0,1]$ with $\sum_{n \in I_k} a_n = 1$ for every $k\in \N$ such that
	$$y_k = \sum_{n \in I_k} a_n x_n.$$

	A Banach space $X$ is said to have \textit{weak*-convex block compact dual ball}\index{weak*-convex block compact dual ball} if every bounded sequence in $X^\ast$ has a weak*-convergent convex block subsequence.
\end{DEFI*}

Since every subsequence is a convex block subsequence, it follows that every Banach space with weak*-sequentially compact dual ball has weak*-convex block compact dual ball.
A simple argument shows that 
every Banach space with property $\EE'$ has 
property $(\CC)$ (see Lemma \ref{LEMMETOC}). Furthermore, in 
Theorem \ref{THEOCONVEXsequentialimpliesconvexblockcpct} we prove that every Banach space with property $\EE'$ has weak*-convex block compact dual ball. Thus, the implications between these Banach-space properties can be summarized in a diagram as follows: 
\vspace{-1cm}
\begin{center}
\begin{tikzpicture}[->,>=stealth',shorten >=1pt,auto,node distance=3.5cm,
thick,main node/.style={rectangle,
	rounded corners,
	draw=black,
	text width=7.1em,
	text centered}]

\node[main node] (FU) {$X$ has weak*-angelic dual};

\node at (2,0) {$\Rightarrow$};

\node[style={rectangle,rounded corners,draw=black,text width=10.1em, text centered}] (sequential) at (4.5,0) {$X$ has weak*-sequential dual ball};

\node at (7,0) {$\Rightarrow$};

\node[style={rectangle,rounded corners,draw=black,text width=12.1em, text centered}] (seqcomp) at (10,0) {$X$ has weak*-sequentially compact dual ball};

\node at (4.5,0.9) {$\Uparrow$};

\node[style={rectangle,rounded corners,draw=black,text width=15.1em, text centered}] (counttightness) at (4.5,1.5) {$(B_{X^*},w^*)$ has countable tightness};

\node[main node] at (0,-2) {$X$ has property $\EE$};

\node at (2,-2) {$\Rightarrow$};

\node[style={rectangle,rounded corners,draw=black,text width=10.1em, text centered}] (sequential) at (4.5,-2) {$X$ has property $\EE'$};

\node at (7,-2) {$\Rightarrow$};

\node[style={rectangle,rounded corners,draw=black,text width=12.1em, text centered}] (seqcomp) at (10,-2) {$X$ has weak*-convex block compact dual ball};

\node at (4.5,-1) {$\Downarrow$};

\node at (0,-1) {$\Downarrow$};

\node at (10,-1) {$\Downarrow$};

\node at (4.5,-2.7) {$\Downarrow$};

\node[style={rectangle,rounded corners,draw=black,text width=15.1em, text centered}] (counttightness) at (4.5,-3.5) {$X$ has property $(\CC)$};

\draw[-implies,double equal sign distance] (7.8,1.5) .. controls (15.5,3) and (15.5,-5).. (7.8,-3.5) ;

\end{tikzpicture}
\end{center}

A. Plichko and D. Yost asked in \cite[p.~352]{pli-yos-2} whether property ($\CC$) implies property $\EE$. J. Moore provided a negative answer in an unpublished notes using a modification of Ostaszewski space (a scattered compact space of height $\w_1$ with countable tightness, but not sequential) using $\Diamond$. %On the other hand, 
In \cite{brech} C. Brech constructed yet another consistent example of a $\CC(K)$-space with property ($\CC$) but not property $\EE$. More specifically, Brech defines a scattered compact space of the form $K=L \cup \{ \infty \}$, where $K$ is the Alexandroff compactification of a space $L$ with the property that  $\ell_1(L)$ is weak*-sequentially closed in $\CC(K)^* = \ell_1(K)$.
Thereby, Brech's construction yields an example of a Banach space with property ($\CC$) that does not satisfy property $\EE'$.

\smallskip

In this article we prove the consistency of the converse implication. More specifically, our main result reads as follows.
\begin{THEO}[PFA]
\label{TheoremA}
%Under PFA, t
The following conditions are equivalent for a Banach space $X$:
\begin{enumerate}
	\item $X$ has property $\EE'$;
	\item $X$ has weak*-sequential dual ball;
	\item $X$ has property $(\CC)$ of Corson;
	\item $(B_{X^*},w^*)$ has countable tightness.
\end{enumerate} 
\end{THEO}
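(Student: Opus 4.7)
The proof establishes the cycle $(2)\Rightarrow(1)\Rightarrow(3)\Rightarrow(4)\Rightarrow(2)$, invoking PFA only where strictly necessary. The arrows $(2)\Rightarrow(1)$ and $(1)\Rightarrow(3)$ hold in ZFC: the former is immediate from the definitions, since every weak*-sequentially closed convex set is, a fortiori, weak*-sequentially closed and hence weak*-closed whenever $X$ has weak*-sequential dual ball; the latter is the content of Lemma~\ref{LEMMETOC}. The arrow $(4)\Rightarrow(2)$ is where PFA enters for the first time, via Balogh's theorem \cite{Ba88}, which under PFA asserts that every compact Hausdorff space of countable tightness is sequential. Applied to the weak*-compact space $(B_{X^\ast},w^\ast)$, this immediately converts countable tightness into weak*-sequentiality of the dual ball, closing this part of the cycle.

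The main technical step is $(3)\Rightarrow(4)$, which extends the Frankiewicz--Plebanek--Ryll-Nardzewski theorem for $\CC(K)$-spaces \cite{FPR00} from MA$+\neg\mathrm{CH}$ to arbitrary Banach spaces under PFA. I would argue by contradiction: assume $X$ has property $(\CC)$ but $(B_{X^\ast},w^\ast)$ is not countably tight. By Arhangel'skii's free sequence characterization, there exists a free sequence $(x_\alpha^\ast)_{\alpha<\omega_1}$ in $B_{X^\ast}$ together with a point $x^\ast\in B_{X^\ast}$ witnessing the failure of countable tightness. Using a combinatorial consequence of PFA — for instance Todorcevic's Open Coloring Axiom, or the $P$-ideal dichotomy combined with $\mathfrak{p}=\aleph_2$ — the plan is to refine the free sequence so that the weak*-closed convex hulls of its tails form a sufficiently coherent decreasing tower. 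Such a tower would then, via Pol's theorem, yield an uncountable family of closed convex subsets of $X$ with empty intersection yet with every countable subfamily intersecting, contradicting property $(\CC)$.

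The main obstacle is precisely this last step: passing from a purely topological free sequence (which only witnesses the failure of countable tightness) to a configuration controlled by \emph{convex} sets on which property $(\CC)$ can be brought to bear. This convexification is exactly what keeps the implication $(3)\Rightarrow(4)$ open in ZFC, and the role of PFA is to supply the combinatorial strength on $\omega_1$ required to carry it through.
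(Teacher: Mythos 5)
Your global architecture coincides with the paper's: the cycle runs through the same four arrows, with $(2)\Rightarrow(1)$ immediate, $(1)\Rightarrow(3)$ being Lemma~\ref{LEMMETOC}, and $(4)\Rightarrow(2)$ being Balogh's main theorem that under PFA compact spaces of countable tightness are sequential. The problem is that your treatment of $(3)\Rightarrow(4)$ is not a proof but an announced plan, and you say so yourself: you invoke a free sequence witnessing the failure of countable tightness and then state that ``the plan is to refine the free sequence so that the weak*-closed convex hulls of its tails form a sufficiently coherent decreasing tower,'' naming OCA or the $P$-ideal dichotomy as candidate tools without specifying any coloring, any ideal, or any argument that actually produces the convex configuration. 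Since you correctly identify this convexification as ``the main obstacle,'' the proposal leaves the only nontrivial implication of the theorem unproved.

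The paper closes this gap not by post hoc refinement of a free sequence but by building convexity into the combinatorial input from the start. One takes a set $S\sub B_{X^*}$ that is countably compact but not weak*-closed, a point $x^*\in\overline{S}^{w^*}\setminus S$, and the free closed filter $\FF$ on $S$ generated by the traces $\overline{U}^{w^*}\cap S$ of weak*-open \emph{convex} neighborhoods $U$ of $x^*$. Hahn--Banach separation then supplies, for each $y\in S$, convex weak*-open neighborhoods $U(y)\sub G(y)$ with $\overline{G(y)}^{w^*}\cap S$ $\FF$-small, and one applies Balogh's relational theorem (Theorem~\ref{TheoBalogh}, i.e.~\cite[Theorem 1.1]{Ba88}) to the pair $L(y)=\overline{G(y)}^{w^*}\cap S$, $R(y)=U(y)\cap S$ to obtain an increasing $\omega_1$-sequence $\langle x_\alpha\mid\alpha<\omega_1\rangle$. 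Because each $G(x_\beta)$ is convex, the increasing-sequence property upgrades automatically to $\overline{\co\{x_\alpha:\alpha<\beta\}}^{w^*}\sub G(x_\beta)$, so the convex set $\bigcup_{\beta<\omega_1}\overline{\co\{x_\alpha:\alpha<\beta\}}^{w^*}$ contains the weak*-closure of each of its countable subsets yet is covered by $\{G(x_\beta):\beta<\omega_1\}$ with no finite subcover, hence is not weak*-closed; by Pol's characterization \cite{Pol80} this contradicts property $(\CC)$. If you want to complete your argument, this is the mechanism you need: choose the relations fed into the PFA combinatorics to be convex-valued from the outset rather than trying to convexify a free sequence afterwards.
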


Combining the above theorem with Brech's result from \cite{brech} we have that  the implication $(3)\implies(1)$ is independent of ZFC. On the contrary, it seems to be unknown whether $(3)\implies (4)$ is provable in ZFC alone. 
Frankiewicz, Plebanek and Ryll-Nardzewski showed that under $\mathrm{MA}+\neg \mathrm{CH}$ this implication holds provided $X$ is a $\CC(K)$-space \cite{FPR00}. We refer the reader to \cite{PleSob15} for more results and a comprehensive treatment of this problem. 
As far as we are concerned, it is unknow whether $(3)\implies (4)$ holds for arbitrary Banach spaces even under the extra assumption of $\mathrm{MA}+\neg \mathrm{CH}$. In general, we do not know whether Theorem \ref{TheoremA}  above holds true under the weaker assumption of $\mathrm{MA}+\neg\mathrm{CH}$.

Theorem~\ref{TheoremA} provides a partial extension of a  result of Arhangel'skii \cite{Arh} saying that, under the PFA, if $T$ is a Lindel\"{o}ff space then any compact subspace of  $\mathcal{C}_p(T)$\footnote{Namely, the space of continuous functions endowed with the topology of point-wise covergence.} has countable tightness. In particular, under the PFA, if a Banach space $X$ is  Lindel\"{o}ff with respect to its weak topology then $(B_{X^*},w^*)$ has countable tightness. Nevertheless the demand of  $X$ being  weak-Lindel\"{o}ff is way stronger than $X$ having property $(\CC)$ of Corson. Indeed, on the one hand weak-Lindel\"{o}ffness implies property $(\CC)$ of Corson\footnote{As every convex norm-closed set $F\subseteq X$ is weak-closed.}. On the other hand, there are Banach spaces $X$ with property $(\CC)$ of Corson which fail to be weak-Lindel\"{o}ff (see \cite[Example 2]{Corson} and recall that property $(\CC)$ is a three-space property).
%\textcolor{blue}{What is known about the Moore-Mrowka Problem under $\mathrm{MA}+\neg CH$? As far as I understand, it seems to be consistent with $\mathrm{MA}+\neg CH$ that every compact space of countable tightness is sequential (see \cite[Corollary 2.3]{Dow92})}. %\ale{Yes, it seems so.}  
%\textcolor{blue}{I'm a bit hesitant. In \cite{Dow92} Balogh's theorem  is proved without using PFA, hence without relying on large cardinals. Instead, it seems he performs an iteration of well-behaved $\omega_2$-cc forcings. See the proof of Lemma~4.6 for one of the iterands.} \ale{This is my comment}

\smallskip

%Our notations are mostly standard. We use $X$ to denote an arbitrary Banach space 

\section{Property $\EE'$ and convex block subsequences}
\label{SectionE}

Let us start this section showing that property $\EE'$ implies property $(\CC)$.

\begin{LEMM}\label{LEMMETOC}
If a Banach space $X$ has property $\EE'$ then $X$ has property $(\CC)$ of Corson. 
\end{LEMM}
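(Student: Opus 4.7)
My plan is to invoke Pol's theorem (quoted in the introduction) and reduce to showing the following: for any convex $C\subseteq B_{X^\ast}$ and any $x^\ast\in\overline{C}^{w^\ast}$, there is a countable $D\subseteq C$ with $x^\ast\in\overline{D}^{w^\ast}$. The natural candidate is to consider
\[
A=\{x^\ast\in B_{X^\ast}:\exists\,D\subseteq C\text{ countable with }x^\ast\in\overline{D}^{w^\ast}\},
\]
verify that $A$ is a weak*-sequentially closed convex subset of $B_{X^\ast}$, and then apply property $\EE'$ to conclude that $A$ is weak*-closed. Since $C\subseteq A$, this immediately yields $\overline{C}^{w^\ast}\subseteq A$, which is what we want.

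The first point to check is convexity of $A$. Given $x_1^\ast,x_2^\ast\in A$, fix countable sets $D_1,D_2\subseteq C$ witnessing membership. For $\lambda\in[0,1]$ set $D_\lambda=\{\lambda d_1+(1-\lambda)d_2 : d_1\in D_1,\,d_2\in D_2\}$; this is countable and, since $C$ is convex, lies inside $C$. Weak*-continuity of the map $(u,v)\mapsto\lambda u+(1-\lambda)v$ gives $\lambda x_1^\ast+(1-\lambda)x_2^\ast\in\overline{D_\lambda}^{w^\ast}$, so this convex combination belongs to $A$.

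Next I would verify weak*-sequential closedness of $A$. If $(x_n^\ast)_n\subseteq A$ converges weak* to $x^\ast$, pick a countable $D_n\subseteq C$ for each $n$ with $x_n^\ast\in\overline{D_n}^{w^\ast}$ and let $D=\bigcup_n D_n$, which is still countable and contained in $C$. Then $x_n^\ast\in\overline{D}^{w^\ast}$ for every $n$, and since $\overline{D}^{w^\ast}$ is weak*-closed, the limit $x^\ast$ also lies in $\overline{D}^{w^\ast}$, so $x^\ast\in A$.

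Having done this, property $\EE'$ applied to the convex weak*-sequentially closed set $A\subseteq B_{X^\ast}$ gives that $A$ is weak*-closed, so $\overline{C}^{w^\ast}\subseteq A$, which by Pol's theorem means $X$ has property $(\CC)$ of Corson. There is no real obstacle here; the only subtlety is bookkeeping to make sure the countable witnesses produced at each step stay inside $C$ (using convexity of $C$) and inside the dual ball, and that the definition of $A$ is exactly the right convex hull to make the $\EE'$ hypothesis applicable.
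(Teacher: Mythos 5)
Your proposal is correct and is essentially the paper's own argument: the paper defines the same set (there called $S_0$), checks that it is weak*-sequentially closed via unions of countable witnesses and convex via the countable set $\{\lambda x_n^*+(1-\lambda)y_m^*\}$, applies property $\EE'$, and concludes through Pol's characterization. The only cosmetic difference is that you justify $\lambda x_1^*+(1-\lambda)x_2^*\in\overline{D_\lambda}^{w^*}$ by continuity of the convex-combination map, whereas the paper runs the equivalent direct neighborhood argument.
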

\begin{proof}
Let $S\subseteq B_{X^*}$ be a convex set and define 
    $$S_0:=\{x^* \in B_{X^*} \mid \mbox{there is a countable set } D\subseteq S \mbox{ such that } x^*\in \overline{D}^{w^*}\}.$$

    We have to prove that $\overline{S}^{w^*}=S_0$. Notice that $S_0$ is sequentially closed hence, as  $X$ has property $(\EE')$ and $S \subseteq S_0 \subseteq \overline{S}^{w^*}$, it suffices to show that $S_0$ is convex.
    Let $x^*,y^* \in S_0$ and $0\leq \lambda \leq 1$. Then, there exist countable sets $\{x_n^*\mid n\in\N\}$ and $\{y_n^*\mid n\in\N\}$ in $S$ such that $x^* \in \overline{\{x_n^*\mid n\in\N\}}^{w^*}$ and $y^* \in \overline{\{y_n^*\mid n\in\N\}}^{w^*}$. We claim that $$\lambda x^* + (1-\lambda)y^* \in \overline{\{\lambda x_n^* + (1-\lambda) y_m^* \mid n,m \in \N \}}^{w^*}.$$
    Let $U$ be any neighborhood of $\lambda x^* + (1-\lambda)y^*$. Then, there exist neighborhoods $U_1$ of $x^*$ and $U_2$ of $y^*$ such that 
    $$ \lambda U_1+(1-\lambda)U_2 :=\{\lambda z_1^* + (1-\lambda) z_2^* \mid z_1^* \in U_1,~z_2^* \in U_2 \} \subseteq U.$$
   Since $x^* \in \overline{\{x_n^*\mid n\in\N\}}^{w^*}$ and $y^* \in \overline{\{y_n^*\mid n\in\N\}}^{w^*}$, there are $n,m\in\N$ such that $x_n^* \in U_1$ and $y_m^* \in U_2$, so $\lambda x_n^* + (1-\lambda)y_m^* \in \lambda U_1+(1-\lambda)U_2 \subseteq U$, as desired.
\end{proof}

\bigskip

The goal of this section is to prove the following result, which was part of the first's author  Ph.D. thesis:
\begin{THEO}
	\label{THEOCONVEXsequentialimpliesconvexblockcpct}
	Every Banach space with property $\EE'$  has weak*-convex block compact dual ball. 
\end{THEO}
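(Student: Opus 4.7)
My plan mirrors the proof of Lemma \ref{LEMMETOC}. Given a bounded sequence $(x_n^*) \subseteq X^*$, after scaling I may assume $(x_n^*) \subseteq B_{X^*}$, and define the set of achievable weak*-limits
\[
S := \{y^* \in B_{X^*} : y^* \text{ is the weak*-limit of a convex block subsequence of } (x_n^*)\}.
\]
The strategy is to show that $S$ is convex and weak*-sequentially closed, invoke property $\EE'$ to conclude $S$ is weak*-closed, and finally verify $S \neq \emptyset$.

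Convexity of $S$ follows from a standard interleaving argument: if $y^*, z^* \in S$ are witnessed by convex block subsequences $(p_k)$ and $(q_k)$ of $(x_n^*)$, one passes to further subsequences $(\tilde p_k), (\tilde q_k)$ whose block supports are strictly alternating and disjoint; then $r_k := \lambda \tilde p_k + (1-\lambda)\tilde q_k$ is itself a convex block subsequence of $(x_n^*)$ whose weak*-limit is $\lambda y^* + (1-\lambda)z^*$.

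The main obstacle is proving that $S$ is weak*-sequentially closed. Suppose $c_m \to c$ weak* with each $c_m \in S$, witnessed by convex block subsequences $(y_{m,k})_k$ of $(x_n^*)$ with $y_{m,k} \to c_m$ weak*. To produce a single convex block subsequence of $(x_n^*)$ converging weak* to $c$, a naive diagonal extraction $y_{m,k(m)}$ is obstructed by the potential non-metrizability of $(B_{X^*}, w^*)$. To circumvent this I would use property $(\CC)$ of Corson, which is granted by Lemma \ref{LEMMETOC}: applied to the countable convex set generated by $\bigcup_{m,k}\{y_{m,k}\}$ together with $\{c_m\} \cup \{c\}$, property $(\CC)$ yields countable subfamilies whose weak*-closures reach $c$; combining such countable approximations with an order-preserving extraction on the blocks $I_{m,k}$ (always available because each $(y_{m,k})_k$ is a convex block subsequence, so its supports are cofinal in $\N$) gives a convex block subsequence $(z_\ell)$ of $(x_n^*)$ with $z_\ell \to c$ weak*.

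Once $S$ is convex and weak*-sequentially closed, property $\EE'$ gives that $S$ is weak*-closed. For non-emptiness, I pick $x^* \in \bigcap_N \overline{\co}^{w^*}\{x_n^* : n \geq N\}$, which is non-empty by Banach--Alaoglu together with the nested weak*-compactness of these sets; applying property $(\CC)$ to each tail convex hull $\co\{x_n^*: n \geq N\}$ and combining with the weak*-closedness of $S$ already established, one concludes $x^* \in S$, so $(x_n^*)$ admits a weak*-convergent convex block subsequence. The technical heart of the argument is unquestionably the sequential-closedness step, where property $(\CC)$ must be leveraged to compensate for the absence of weak*-metrizability; everything else is essentially routine.
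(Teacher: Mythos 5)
There is a genuine gap, and it sits exactly where you locate the ``technical heart'' of your argument. Property $(\CC)$ cannot deliver the weak*-sequential closedness of your set $S$: property $(\CC)$ only reduces membership in the weak*-closure of a convex set to membership in the weak*-closure of a \emph{countable} subset, and every set you propose to apply it to (the convex hull generated by $\bigcup_{m,k}\{y_{m,k}\}\cup\{c_m\}\cup\{c\}$, the tail hulls $\co\{x_n^*:n\geq N\}$) is already countable, so the hypothesis yields nothing. Knowing that $c$ lies in the weak*-closure of the countable set $\{y_{m,k}\}$ does not produce a sequence from that set converging to $c$; that would require a Fr\'echet--Urysohn-type property, which is strictly stronger than anything granted by $(\CC)$ or $\EE'$. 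The obstruction to your diagonal extraction is not merely non-metrizability but the fact that the weak*-sequential closure of a countable convex set need not be sequentially closed --- the paper states this explicitly before Lemma \ref{LEMMAAUXILIARSEQUENTIALCLOSURENORMCLOUSURE}, citing \cite[Example 5.2]{AMCR} --- and your set $S$ is of exactly this kind. The non-emptiness step suffers from the same defect and is moreover essentially circular: exhibiting a single point of $\bigcap_N \overline{\co\{x_n^*:n\geq N\}}^{w^*}$ as the weak*-limit of a convex block subsequence \emph{is} the content of the theorem, and ``combining with the weak*-closedness of $S$'' cannot produce it, since a weak*-closed convex set may perfectly well be empty.

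For comparison, the paper's proof avoids any claim of sequential closedness for a set of attainable limits. It sets $C=\co\{x_n^*\}$ and runs a dichotomy on whether the weak*-sequential closure $S(C)$ equals $\overline{C}^{\|\cdot\|}$. If it does, Lemmas \ref{LEMMAAUXILIARSEQUENTIALCLOSURENORMCLOUSURE} and \ref{LEMMAAUXILIARMETRIZABILITY} show $\overline{C}^{w^*}$ is norm-separable, hence weak*-metrizable, and a convergent subsequence exists by compactness. If it does not, one picks $y_n^*=\sum_k\lambda_k^n x_k^*$ in $C$ converging weak* to some $x^*\notin\overline{C}^{\|\cdot\|}$, arranges the coefficients to converge with $\sum_k\lim_n\lambda_k^n<1$, and uses the excess mass $\lambda_k^n-\lambda_k$, after renormalization and a small-perturbation argument, to assemble an honest convex block subsequence. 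Some quantitative analysis of the coefficients along these lines appears unavoidable; the soft argument you propose does not close.
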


Given a Banach space $X$ and a set $C\subseteq B_{X^*}$ we denote by $S(C)$ its weak*-sequential closure; i.e. 
$S(C)$ is the set of limits of weak*-convergent sequences in $C$. It is immediate that $\overline{C}^{\| \cdot \|} \subseteq S(C)$ for every set $C \subseteq B_X$. Notice that $S(C)$ might not be sequentially closed. Nevertheless, the situation can be even worse, since the weak*-sequential closure of a bounded convex set $C$ might not be norm-closed (see \cite[Example 5.2]{AMCR}).
This fact motivates the following auxiliary lemma which will be used in the proof of Theorem~\ref{THEOCONVEXsequentialimpliesconvexblockcpct}.
\begin{LEMM}
	\label{LEMMAAUXILIARSEQUENTIALCLOSURENORMCLOUSURE}
	Let $X$ be a Banach space with property $\EE'$ and $C \sub B_{X^\ast}$ a convex subset. If the weak*-sequential closure of $C$ is equal to $\overline{C}^{\| \cdot \|}$ then $\overline{C}^{w^\ast}=\overline{C}^{\| \cdot \|}$.
\end{LEMM}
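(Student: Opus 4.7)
The plan is to show that $\overline{C}^{\|\cdot\|}$ is weak*-sequentially closed, apply property $\EE'$ to conclude that it is weak*-closed, and then read off the desired equality.

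First I would dispose of the easy inclusion $\overline{C}^{\|\cdot\|} \subseteq \overline{C}^{w^\ast}$, which follows at once from the fact that norm-convergence implies weak*-convergence. So the whole task is to establish the reverse inclusion. Since $\overline{C}^{\|\cdot\|}$ is clearly convex (being the norm-closure of a convex set) and sits inside $B_{X^\ast}$, property $\EE'$ will hand us weak*-closedness \emph{as soon as} we verify that $\overline{C}^{\|\cdot\|}$ is weak*-sequentially closed.

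To check this sequential closedness, I would take an arbitrary sequence $(y_n^\ast)_n \subseteq \overline{C}^{\|\cdot\|}$ converging weak* to some $y^\ast \in B_{X^\ast}$, and aim to show $y^\ast \in \overline{C}^{\|\cdot\|}$. Since each $y_n^\ast$ lies in $\overline{C}^{\|\cdot\|}$, I can pick $c_n \in C$ with $\|y_n^\ast - c_n\| < 1/n$. A direct estimate,
\[
|c_n(x) - y^\ast(x)| \le \|c_n - y_n^\ast\|\cdot\|x\| + |y_n^\ast(x) - y^\ast(x)|,
\]
shows that $c_n \to y^\ast$ weak*. Hence $y^\ast \in S(C)$, and by the standing hypothesis $S(C) = \overline{C}^{\|\cdot\|}$ we conclude $y^\ast \in \overline{C}^{\|\cdot\|}$, as required.

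With weak*-sequential closedness in hand, property $\EE'$ gives that $\overline{C}^{\|\cdot\|}$ is weak*-closed, which together with the trivial inclusion $C \subseteq \overline{C}^{\|\cdot\|}$ yields $\overline{C}^{w^\ast} \subseteq \overline{C}^{\|\cdot\|}$. Combined with the first paragraph, this proves $\overline{C}^{w^\ast} = \overline{C}^{\|\cdot\|}$. No real obstacle arises: the only subtle point is that we must not attempt a naive diagonal extraction inside $S(C)$ (which would fail for non-separable $X$), and it is precisely the hypothesis $S(C) = \overline{C}^{\|\cdot\|}$ that lets us bypass this by approximating in norm instead.
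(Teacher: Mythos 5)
Your proof is correct and is essentially the paper's argument: both rest on the observation that an element of the weak*-sequential closure of $\overline{C}^{\|\cdot\|}=S(C)$ can be reached by a weak*-convergent sequence from $C$ itself (via norm-approximation of the $y_n^\ast$ by elements $c_n\in C$), so that property $\EE'$ applied to the convex set $\overline{C}^{\|\cdot\|}$ yields weak*-closedness. The only cosmetic difference is that you argue directly (sequentially closed $\Rightarrow$ closed) while the paper phrases the same step as a proof by contradiction.
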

\begin{proof}
	Obviously $\overline{C}^{\| \cdot \|}$ is contained in $S(C)$. Suppose $S(C)=\overline{C}^{\| \cdot \|}$ but $\overline{C}^{w^\ast} \neq \overline{C}^{\| \cdot \|}$. Then $S(C)$ is not weak*-closed. Since $X$ has property $\EE'$, there exists a sequence $x_n^\ast \in S(C)$ which converges to a point $x^\ast \notin S(C)$. It follows from the equality $S(C)=  \overline{C}^{\| \cdot \|}$ that there exists $y_n^\ast \in C$ such that $\| y_n^\ast - x_n^\ast  \| < \frac{1}{2^n}$ for every $n \in \N$. Then,
	$ y_n^\ast = x_n^\ast +  (y_n^\ast - x_n^\ast ) $ is weak*-convergent to $x^\ast$. Thus $x^\ast \in S(C)$ in contradiction with our assumption.
\end{proof}

For the sake of completeness we include a proof of the following fact which is folklore (see, for instance, the proof of \cite[Corollary 6 in pg.~82]{DiestelUhl}).

\begin{LEMM}
\label{LEMMAAUXILIARMETRIZABILITY}
	Let $X$ be a Banach space and $A \sub B_{X^\ast}$ a norm-separable weak*-closed set. Then $A$ is weak*-metrizable.
\end{LEMM}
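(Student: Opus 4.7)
The natural strategy is to produce a countable subset $\{y_k\}_{k\in\N}\subseteq B_X$ whose evaluations separate the ``difference set'' $A-A$, and then use it to build an explicit metric on $A$ that induces the weak*-topology.

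First, I would exploit the hypothesis of norm-separability. Let $\{x_n^\ast\}_{n\in\N}$ be a countable $\|\cdot\|$-dense subset of $A$ and consider the countable set
$D:=\{x_n^\ast-x_m^\ast \mid n,m\in\N\}\setminus\{0\},$
which is norm-dense in $(A-A)\setminus\{0\}$. For every $w^\ast\in D$, using merely the definition $\|w^\ast\|=\sup_{x\in B_X}|w^\ast(x)|$, pick $y_{w^\ast}\in B_X$ with $|w^\ast(y_{w^\ast})|\geq \|w^\ast\|/2$, and enumerate the resulting countable family as $\{y_k\}_{k\in\N}\subseteq B_X$.

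Second, I would define on $A$ the function
$$\rho(x^\ast,y^\ast):=\sum_{k=1}^\infty 2^{-k}\,|(x^\ast-y^\ast)(y_k)|.$$
Clearly $\rho$ is a pseudometric bounded above by $2\operatorname{diam}_{\|\cdot\|}(A)$. The crucial step is to verify that $\rho$ is in fact a metric, i.e.\ that the family $\{y_k\}_k$ separates points of $A$. If $\rho(x^\ast,y^\ast)=0$, write $w^\ast:=x^\ast-y^\ast\in A-A$ and observe $w^\ast(y_k)=0$ for all $k$. Given $\eps>0$, approximate $w^\ast$ in norm by some $w_0^\ast\in D$ with $\|w^\ast-w_0^\ast\|<\eps$, and let $y_{k_0}\in B_X$ be its associated functional. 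Then
$$\tfrac{1}{2}\|w_0^\ast\|\leq |w_0^\ast(y_{k_0})|=|(w_0^\ast-w^\ast)(y_{k_0})|\leq \eps,$$
whence $\|w^\ast\|\leq \|w^\ast-w_0^\ast\|+\|w_0^\ast\|\leq 3\eps$. Since $\eps$ was arbitrary, $w^\ast=0$, so $\rho$ is a metric.

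Third, I would close the argument with a standard compactness trick. Each evaluation $x^\ast\mapsto x^\ast(y_k)$ is weak*-continuous and the series defining $\rho$ converges uniformly on $A$, so the identity map $(A,w^\ast)\to (A,\rho)$ is continuous; hence the $\rho$-topology is coarser than the weak*-topology on $A$. By Banach--Alaoglu, $A$ is weak*-compact (it is weak*-closed in $B_{X^\ast}$), and since $(A,\rho)$ is Hausdorff, a continuous bijection from a compact space onto a Hausdorff space is a homeomorphism. Therefore the two topologies coincide and $(A,w^\ast)$ is metrizable. The main subtlety is the separation claim in the second paragraph; everything else is routine.
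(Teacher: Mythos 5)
Your proposal is correct and follows essentially the same route as the paper: both arguments use a countable norm-dense subset of $A$ to select, for each pair of its elements, a $\tfrac12$-norming vector in $B_X$, and then show by norm-approximation that this countable family of weak*-continuous evaluations separates points of the weak*-compact set $A$. The only cosmetic difference is that you make the resulting metric $\rho$ and the compact-to-Hausdorff homeomorphism argument explicit, whereas the paper invokes this standard fact directly.
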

\begin{proof}
 Set $D=\{x_n^*\mid n\in\N\} \sub A$ a countable norm-dense set and fix  $x_{i,j} \in B_X$ for every  $i,j\in\N $ such that $\frac{\|x_i^*-x_j^*\|}{2} \leq x_i^*(x_{i,j})- x_j^*(x_{i,j})$. 
 If $x^\ast, y^\ast \in A$ are different then we can take $x^*_i,x^*_j \in D$ with $\| x^\ast - x_i^* \| < \frac{\|x^\ast- y^\ast\|}{8}$ and  $\| y^\ast - x_j^* \| < \frac{\|x^\ast- y^\ast\|}{8}$. Then $\|x_i^*-x_j^*\|>\frac{3\|x^*-y^*\|}{4}$ and
$$ x^*(x_{i,j})-y^*(x_{i,j}) > x_i^*(x_{i,j})-\frac{\|x^\ast- y^\ast\|}{8}- x_j^*(x_{i,j})-\frac{\|x^\ast- y^\ast\|}{8} \geq$$
$$ \geq  \frac{\|x_i^*-x_j^*\|}{2} - \frac{\|x^\ast- y^\ast\|}{4} >0.$$	
 
Thus, the set $\{x_{i,j} \mid i,j\in\N \}$ gives a countable family of weak*-continuous functions on the weak*-compact $A$ which separates points of $A$. Thus, $A$ is weak*-metrizable.
\end{proof}

\begin{proof}[Proof of Theorem \ref{THEOCONVEXsequentialimpliesconvexblockcpct}]
	Let $(x_n^\ast)_{n\in \mathbb{N}}$ be a sequence in $B_{X^\ast}$. We have to show that $(x_n^\ast)_{n\in\mathbb{N}}$ has a weak*-convergent convex block subsequence. Set $C$ the convex hull of $(x_n^\ast)_{n\in\mathbb{N}}$.

    Notice that if $S(C)=\overline{C}^{\| \cdot \|}$ then $S(C)=\overline{C}^{w ^\ast}=\overline{C}^{\| \cdot \|}$ by Lemma \ref{LEMMAAUXILIARSEQUENTIALCLOSURENORMCLOUSURE} and therefore $\overline{C}^{w ^\ast}$ is norm-separable, so weak*-metrizable by Lemma \ref{LEMMAAUXILIARMETRIZABILITY} and, in particular, $\Suc{x_n^\ast}{n}$ contains a weak*-convergent subsequence.

    Thus, we can assume that $S(C) \neq \overline{C}^{\| \cdot \|}$, i.e.~that there exists $x^\ast \in S(C) \setminus \overline{C}^{\| \cdot \|}$. Let $(y_n^\ast)_{n\in\mathbb{N}}$ be a sequence in $C$ weak*-convergent to $x^\ast$.
	Write $$ y_n ^\ast = \sum_{k=1}^\infty \lambda_k^n x_k ^ \ast,$$
	with $$\sum_{k=1}^\infty \lambda_k^n=1, ~ 0 \leq \lambda_k^n \leq 1 \mbox { and } \lambda_k^n=0 \mbox{ for all except finitely many } k \in \N$$
	for every $n\in \N$. Without loss of generality, we may suppose that $(\lambda_k^n)_{n\in\mathbb{N}}$ converges to some point $\lambda_k$ for every $k \in \N$. Moreover, a standard diagonal argument proves that we can assume that each sequence $(\lambda_k^n)_{n\in\mathbb{N}}$ is eventually monotone. Notice that $0 \leq \sum_{k=1}^\infty \lambda_k \leq 1$. We claim that $\sum_{k=1}^\infty \lambda_k <1$. If $\sum_{k=1}^\infty \lambda_k =1$ then $\sum_{k=1}^\infty \lambda_k x_k^\ast \in \overline{C}^{\| \cdot\|}$ and it can be easily seen that $x^\ast = \sum_{k=1}^\infty \lambda_k x_k^\ast $, in contradiction with $x^\ast \notin  \overline{C}^{\| \cdot\|}$.

	Thus, $0 \leq \lambda:=\sum_{k=1}^\infty \lambda_k <1$. Set $N_n = \{ k \in \N \mid \lambda_k^n > \lambda_k\}$ for every $n \in \N$. Notice  that
	\begin{equation}
	\label{equationaux3.1}
	\sum_{k \in N_n} (\lambda_k^n - \lambda_k ) = 1 - \sum_{k \in N_n} \lambda_k - \sum_{k \notin N_n } \lambda_k^n \geq 1 - \sum_{k=1}^\infty \lambda_k = 1 - \lambda >0.
	\end{equation}
	
	Passing to a subsequence if necessary, we suppose that there exists
	\begin{equation}
	\label{equationauxiliarlimit}
	\lambda' = \lim_n \sum_{k \in N_n} (\lambda_k^n - \lambda_k ) \geq 1- \lambda >0.
	\end{equation}
	
	Set $y_n^+ = \sum_{k \in N_n} \lambda_k^n x_k^\ast$ and $y_n^-= \sum_{k \notin N_n} \lambda_k^n x_k^\ast$.
	We claim that $(y_n^-)_{n\in\mathbb{N}}$ is norm-Cauchy and therefore norm-convergent.
	Fix $\varepsilon >0$ and take $k_0,k_1 \in \N$ such that $\sum_{k > k_0 } \lambda_k < \frac{\varepsilon}{4}$, $(\lambda_k^n)_{n\geq k_1}$ is monotone for every $k \leq k_0$ and $$ \sum_{k=1}^{k_0} |\lambda_k^n - \lambda_k^m| < \frac{\varepsilon}{2}$$ for every $n,m \geq k_1$.
	Then,
	
	$$ \|y_n^- - y_m^- \| \leq \sum_{k=1}^{k_0} |\lambda_k^n - \lambda_k^m| + \sum_{k > k_0,~k \notin N_n} \lambda_k^n + \sum_{k > k_0,~k \notin N_m}\lambda_k^m \leq \frac{\varepsilon}{2}+ 2 \sum_{k > k_0} \lambda_k < \varepsilon$$
	for every $n,m \geq k_1$.
	Thus, $(y_n^-)_{n\in\mathbb{N}}$ is norm convergent and, since $y_n^+ = y_n^\ast-y_n^-$, the sequence $(y_n^+)_{n\in\mathbb{N}}$ is weak*-convergent to a point $y^\ast$.

	Set 
	$$z_n^\ast = \frac{1}{\sum_{k \in N_n} (\lambda_k^n - \lambda_k )}  \sum_{k \in N_n}(\lambda_k^n - \lambda_k )x_k^\ast = \frac{1}{\sum_{k \in N_n} (\lambda_k^n - \lambda_k )} (y_n^+ - \sum_{k \in N_n}\lambda_k x_k^\ast )$$ for every $n \in \N$.  It follows from (\ref{equationaux3.1}) and (\ref{equationauxiliarlimit}) that $z_n^\ast$ is well-defined, $z_n^\ast \in C$ and it converges to $\frac{1}{\lambda'}(y^\ast - \sum_{k \in M} \lambda_k x_k^\ast)$, where $$M= \{ k \in \N\mid  (\lambda_k^n)_{n\in\mathbb{N}} \mbox{ is eventually decreasing}\}.$$
	
	\medskip
	
	For each $n,k \in \N$, write $\beta_k^n = \frac{\lambda_k^n - \lambda_k}{\sum_{k' \in N_n} (\lambda_{k'}^n - \lambda_{k'} )}$ if $k \in N_n$ and $\beta_k^n=0$ if $k \notin N_n$.
	Then, $(\beta_k^n)_{n\in\mathbb{N}}$ converges to zero and $z_n^\ast= \sum_{k=1}^\infty \beta_k^n x_k^\ast$.
	By taking small perturbations of $z_n^\ast$, we are going to construct a convex block subsequence $(u_n^\ast)_{n\in\mathbb{N}}$ of $(x_n^\ast)_{n\in\mathbb{N}}$ with the same limit than $(z_n^\ast)_{n\in\mathbb{N}}$.

	Fix $u_1^\ast=z_1^\ast$ and take a finite subset $I_1$ of $\N$ such that 
	$u_1^\ast= \sum_{k \in I_1} \beta_k^1 x_k^\ast$.
	Fix $n_1 \in \N$ such that $\sum_{k \in I_1} \beta_k^n < \frac{1}{2}$ for every $n \geq n_1$ and take 
	a finite set $I_2$ with $\max(I_1)< \min(I_2)$ such that
	$z_{n_1}^\ast = \sum_{k \in I_1 \cup I_2} \beta_k^{n_1} x_k^\ast$.
	Take $u_2^\ast =  \frac{1}{\sum_{k \in I_2} \beta_k^{n_1} } \sum_{k \in I_2} \beta_k^{n_1} x_k^\ast$.
	Notice that $u_2^\ast \in C$ and $$\| z_{n_1}^\ast - u_2^\ast \| < \frac{1}{2} + \sum_{k \in I_2} \beta_k^{n_1}\biggl(\frac{1}{\sum_{k' \in I_2} \beta_{k'}^{n_1}}-1\biggr) \leq \frac{1}{2} +  \biggl(\frac{1}{\sum_{k \in I_2} \beta_{k}^{n_1}}-1\biggr).$$

	Repeating this argument we construct a sequence $(u_n^\ast)_{n\in\mathbb{N}}$ in $C$, an increasing sequence $(n_k)_{k\in\mathbb{N}}$ in $\N$ and a sequence of finite sets $(I_k)_{k\in\mathbb{N}}$ of $\N$ with $\max(I_k) < \min(I_{k+1})$ such that
	$$ \sum_{k \in I_1\cup I_2 \cup \dots \cup I_{r}} \beta_k^m < \frac{1}{2^r} $$ for every $m \geq n_r$, 
	$$ z_{n_r}^\ast = \sum_{k \in I_1\cup I_2 \cup \dots \cup I_{r+1}} \beta_k^{n_r} x_k^\ast,$$
	$$u_{r+1}^\ast= \frac{1}{\sum_{k \in I_{r+1}} \beta_k^{n_r} } \sum_{k \in I_{r+1}} \beta_k^{n_r} x_k^\ast$$ 
	and 
	$$\| z_{n_r}^\ast - u_{r+1}^\ast \| < \frac{1}{2^r} + \sum_{k \in I_{r+1}} \beta_k^{n_r}\biggl(\frac{1}{\sum_{k' \in I_{r+1}} \beta_{k'}^{n_r}}-1\biggr) \leq \frac{1}{2^r} +  \biggl(\frac{1}{\sum_{k \in I_{r+1}} \beta_{k}^{n_r}}-1\biggr)$$
	$$\leq \frac{1}{2^r}+\biggl(\frac{2^r}{2^r-1}-1\biggr)=\frac{1}{2^r}+\frac{1}{2^r-1},$$	
	where the last inequality follows from 
	$$ \sum_{k \in I_{r+1}} \beta_{k}^{n_r} = 1 - \sum_{k \in I_1\cup I_2 \cup \dots \cup I_{r}} \beta_k^{n_r} \geq 1 - \frac{1}{2^r}=\frac{2^r-1}{2^r}.$$ 
	
	Since $\| z_{n_r}^\ast - u_{r+1}^\ast \|$ converges to zero, we conclude that $(u_r^\ast)_{r\in\mathbb{N}}$ is a convex block subsequence of $(x_n^\ast)_{n\in\mathbb{N}}$ which is weak*-convergent with the same limit than $(z_n^\ast)_{n\in\mathbb{N}}$.
\end{proof}

\section{From property $(\CC)$ to countable tightness}
\label{SectionEquivalences}

In this section we characterize (under PFA)  property $(\CC)$ of Corson (Theorem \ref{TheoremA}): %Assume PFA.  By virtue of Balogh's theorem \cite{Ba88} a Banach space $X$  has weak*-sequential dual ball provided $(B_{X^*},w^*)$ has countable tightness. Since countable tightness of $(B_{X^*},w^*)$ entails property $(\CC)$ of $X$ it suffices to establish the following:

\begin{LEMM}[PFA]
\label{TheoCtightness}
If $X$ has property $(\CC)$ of Corson then $(B_{X^*},w^*)$ has countable tightness.
\end{LEMM}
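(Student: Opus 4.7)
My plan is to proceed by contradiction, using Arhangel'skii's classical characterization that a compact Hausdorff space has countable tightness if and only if it contains no free sequence of length $\omega_1$. Thus I would assume that $(x^*_\alpha)_{\alpha<\omega_1}\subseteq B_{X^*}$ is a weak*-free sequence, i.e.\ $\overline{\{x^*_\beta : \beta<\alpha\}}^{w^*}\cap\overline{\{x^*_\beta : \beta\geq\alpha\}}^{w^*}=\emptyset$ for every $\alpha<\omega_1$, and then derive a contradiction from property $(\CC)$ combined with $\mathrm{PFA}$.

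The first step exploits property $(\CC)$ via Pol's theorem. By weak*-compactness the intersection $K:=\bigcap_{\alpha<\omega_1}\overline{\co(\{x^*_\beta : \beta\geq\alpha\})}^{w^*}$ is nonempty, so fix $y^*\in K$. Since $y^*\in\overline{\co(\{x^*_\alpha : \alpha<\omega_1\})}^{w^*}$, property $(\CC)$ yields a countable $D\subseteq\co(\{x^*_\alpha : \alpha<\omega_1\})$ with $y^*\in\overline{D}^{w^*}$; collecting the finitely many indices appearing in each element of $D$, I obtain a countable $S\subseteq\omega_1$ with $y^*\in\overline{\co(\{x^*_\alpha : \alpha\in S\})}^{w^*}$. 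Choosing $\gamma<\omega_1$ with $\gamma>\sup S$, I land at $y^*\in\overline{\co(\{x^*_\alpha : \alpha<\gamma\})}^{w^*}\cap\overline{\co(\{x^*_\alpha : \alpha\geq\gamma\})}^{w^*}$: the two \emph{convex hull} closures meet, whereas the free-sequence hypothesis only forbids the set closures themselves from meeting.

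The heart of the argument is then to amplify this mismatch into a genuine contradiction using $\mathrm{PFA}$. I would proceed in the style of Balogh's proof of the Moore--Mrowka theorem: build a continuous $\in$-chain of countable elementary submodels $(M_\xi)_{\xi<\omega_1}$ of some $H(\theta)$ containing the free-sequence data, and use Pol's theorem inside each $M_\xi$ to encode the convex witness $D$ as countable combinatorial data. A suitable proper forcing (or alternatively an $\mathrm{OCA}$-style coloring on pairs $\alpha<\beta$ indexed by whether $x^*_\alpha$ is weak*-separated from the tail $\{x^*_\gamma : \gamma\geq\beta\}$, with the separation convex-witnessed via $(\CC)$) should then yield an uncountable $Y\subseteq\omega_1$ and weak*-open neighborhoods $U_\alpha\ni x^*_\alpha$, $\alpha\in Y$, forcing some $x^*_{\alpha_0}$ to lie in the weak*-closure of $\{x^*_\beta : \beta>\alpha_0,\ \beta\in Y\}$, contradicting freeness.

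The main obstacle is precisely this final extraction step. The second step only tells us that $y^*$ lies in the closure of convex combinations drawn from both an initial segment and a tail; it does \emph{not} produce a cluster point of the sequence itself straddling the partition. Turning the convex witness into a witness \emph{inside the sequence} is the non-trivial reflection problem for which $\mathrm{PFA}$ is invoked, in analogy with its role in upgrading countable tightness to sequentiality in Balogh's theorem, and is where the bulk of the technical work will concentrate.
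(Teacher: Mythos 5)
Your proposal sets up the problem correctly but stops short of a proof at exactly the point you yourself identify: the ``final extraction step'' is not an argument, only a statement that some proper forcing or OCA-style coloring ``should'' yield the desired reflection. Everything before that point is sound but also not load-bearing: the observation that $\overline{\co(\{x^*_\alpha : \alpha<\gamma\})}^{w^*}$ meets $\overline{\co(\{x^*_\alpha : \alpha\geq\gamma\})}^{w^*}$ is true yet perfectly compatible with freeness of the sequence, and the paper's proof makes no use of any such fact. The real difficulty is that property $(\CC)$ (via Pol's theorem) only sees \emph{convex} sets, while a free sequence carries no convexity; what one needs is an $\omega_1$-sequence in which each initial segment is separated from the corresponding tail by a \emph{convex} weak*-open set, so that the separation automatically passes to convex hulls.

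The paper obtains this not by post-processing a free sequence but by running Balogh's machinery with convexity built in from the start. Given a set $S\subseteq B_{X^*}$ witnessing the failure of countable tightness (countably compact, not weak*-closed), fix $x^*\in\overline{S}^{w^*}\setminus S$ and form the free closed filter $\FF$ on $S$ generated by the traces $\overline{U}^{w^*}\cap S$ of weak*-open \emph{convex} neighborhoods $U$ of $x^*$. Hahn--Banach separation provides, for each $y\in S$, convex weak*-open neighborhoods $U(y)$ and $G(y)$ of $y$ with $\overline{U(y)}^{w^*}\subseteq G(y)$ and $\overline{G(y)}^{w^*}\cap S$ being $\FF$-small; Balogh's Theorem 1.1 (the PFA input) applied to $L(y)=\overline{G(y)}^{w^*}\cap S$ and $R(y)=U(y)\cap S$ then yields an increasing $\omega_1$-sequence $\langle x_\alpha\mid\alpha<\omega_1\rangle$ satisfying $\overline{\co\{x_\alpha : \alpha<\beta\}}^{w^*}\subseteq G(x_\beta)\subseteq B_{X^*}\setminus\{x_\alpha : \alpha>\beta\}$ for all $\beta<\omega_1$. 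The union $L=\bigcup_{\beta<\omega_1}\overline{\co\{x_\alpha : \alpha<\beta\}}^{w^*}$ is a convex set containing the weak*-closure of each of its countable subsets, yet it is covered by $\{G(x_\beta) : \beta<\omega_1\}$ with no finite subcover, hence is not weak*-compact and so not weak*-closed; by Pol's theorem this contradicts property $(\CC)$. This is the concrete content your sketch is missing, and without it (or a genuine substitute for Balogh's relation-theoretic theorem) the proposal does not constitute a proof.
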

In our proof of Lemma \ref{TheoCtightness} we  use the following notion %of \emph{$\omega_1$-increasing sequence} introduced by
due to Balogh \cite{Ba88}:  %Let us commence introducing the pertinent concepts and notations. %will make use of the key ingredient of the proof of the Moore-Mrowka problem of Balogh. Let us introduce the required notation.\ale{Polish}
\begin{DEFI}
	Let $T$ be a non-empty set and $L, R \subseteq T^2$ two reflexive relations. A sequence $\langle x_\alpha\mid \alpha\in\omega_1\rangle\subseteq T$ of distinct points %of $M$ 
	is  an \emph{increasing $\omega_1$-sequence for  the pair $\langle L,R\rangle $} if
$$\forall \beta\in\omega_1\;(L(x_\beta)\cap \langle x_\alpha\mid \alpha<\omega_1\rangle\subseteq \langle x_\alpha\mid  \alpha\leq \beta\rangle  \subseteq R(x_\beta)\cap \langle x_\alpha\mid \alpha<\omega_1\rangle)\footnote{Here $L(x)$ (resp. $R(x)$) is a shorthand for $\{y\in T\mid (x,y)\in L\}$ (resp. $\{y\in T\mid (x,y)\in R\}$).}.$$ 
\end{DEFI} %\ale{Introduce meaning of notation $A(x)$}
\begin{DEFI}
	Let  $T$ be a topological space and $\FF$ a closed filter on $T$\footnote{I.e., a  filter consisting of closed sets in $T$.}. 
 
 A subspace  $S\subseteq T$ is said to be \emph{$\FF$-small} if there is  $F \in \FF$ with $S \cap F=\emptyset$. A relation $R\subseteq T^2$ is called $\FF$-small-valued if $R(x)$ is $\FF$-small for every $x\in T$.
\end{DEFI}

%Let $\langle M,\mathcal{F}, A, V\rangle$ be $\langle \omega_1, \mathrm{Club}_{\omega_1}, \leq, \geq\rangle $ where $\mathrm{Club}_{\omega_1}$ denotes the club filter on $\omega_1$ (cf. \cite{Kunen}). It is easy to check that $A$ is $\mathrm{Club}_{\omega_1}$-small and that every $\in$-increasing sequence of ordinals $\langle x_\alpha\mid \alpha<\omega_1\rangle\subseteq \omega_1$ is $\omega_1$-increasing for $\langle A, V\rangle$. 

%\textcolor{blue}{Shall we say somewhere the meaning of $A(x)$?} \ale{Maybe we should later mention a more sophisticated example} \ale{Let's discuss again the intuittion}
%If $M$ is a topological space and $\FF$ is a closed filter on $M$, i.e. a filter consisting of closed sets of $M$, then a subset $N$ of $M$ is said to be $\FF$-small if there is an $F \in \FF$ with $N \cap F=\emptyset$. A relation $A\subseteq M^2$ is called $\FF$-small-valued if $A(x)$ is $\FF$-small for every $x\in M$. These concepts were used by Balogh, who proved the following.

\smallskip

The following theorem due to Balogh will be instrumental in our proof of Lemma~\ref{TheoCtightness}:

\begin{THEO}\cite[Theorem 1.1]{Ba88}
\label{TheoBalogh}
Let $T$ be a countably compact, noncompact $T_1$-space, and let $\FF$ be a free closed filter on $T$. Suppose that $L,R \subseteq T^2$ is a pair of reflexive relations  such that $L$ is $\FF$-small-valued and for every $x\in T$, $L(x)$ is closed and $R(x)$ is open.

Then, under $\mathrm{PFA}$, there is an increasing $\omega_1$-sequence for the pair $\langle L,R\rangle $.
\end{THEO}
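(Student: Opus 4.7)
The plan is to argue by contradiction. Assume $X$ has property $(\CC)$, and suppose $(B_{X^*},w^*)$ fails to have countable tightness. Fix $A\sub B_{X^*}$ and $x^*\in\overline{A}^{w^*}$ with $x^*\notin\overline{C}^{w^*}$ for every countable $C\sub A$; Pol's theorem applied to $\co(A)$ still provides a countable $A_0\sub A$ with $x^*\in\overline{\co(A_0)}^{w^*}$, so the obstruction is precisely a gap between convex-hull closures and plain closures. The idea is to feed Theorem~\ref{TheoBalogh} with data designed so that the resulting increasing $\omega_1$-sequence $\langle y_\alpha^*\rangle_{\alpha<\omega_1}$ simultaneously satisfies
\[
x^*\notin\overline{\co(\{y_\alpha^*\}_{\alpha\le\beta})}^{w^*}\ \text{for every}\ \beta<\omega_1,\qquad\text{and}\qquad x^*\in\overline{\co(\{y_\alpha^*\}_{\alpha<\omega_1})}^{w^*}.
\]
Granted this, property $(\CC)$ applied to the convex set $\co(\{y_\alpha^*\}_{\alpha<\omega_1})$ yields a countable $D$ with $x^*\in\overline{D}^{w^*}$; since $D$ involves only countably many of the $y_\alpha^*$, $D\sub\co(\{y_\alpha^*\}_{\alpha\le\beta})$ for some $\beta<\omega_1$, contradicting the first conjunct.

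To supply Balogh's input I take $T:=\overline{A}^{w^*}\setminus\{x^*\}$, a weak*-subspace of $B_{X^*}$ that is countably compact (any countable $C\sub T$ has weak*-compact closure in $\overline{A}^{w^*}$; if $C$ is infinite this closure is infinite and thus not equal to $\{x^*\}$, so meets $T$) and non-compact (as $x^*\in\overline{T}^{w^*}\setminus T$). For each $y^*\in T$, using that $X$ separates $X^*$, pick $\phi_{y^*}\in X$ and $c_{y^*}\in\R$ with $\phi_{y^*}(y^*)<c_{y^*}<\phi_{y^*}(x^*)$, and set
\[
R(y^*):=\{z\in T:\phi_{y^*}(z)<c_{y^*}\},\qquad L(y^*):=\{z\in T:\phi_{y^*}(z)\le c_{y^*}\};
\]
then $R(y^*)$ is convex weak*-open with $y^*\in R(y^*)$ and $\overline{R(y^*)}^{w^*}\not\ni x^*$, and $L(y^*)$ is weak*-closed and contains $y^*$. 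Let $\FF$ be the free closed filter on $T$ generated by the slabs $\{z\in T:\psi(z)\le d\}$ with $\psi\in X$ and $d<\psi(x^*)$, which is indeed free by the point-separation of $X$. Rescaling each $\phi_{y^*}$ so that $\phi_{y^*}(x^*)-c_{y^*}<1$, the set $L(y^*)$ is disjoint from $\{z\in T:-\phi_{y^*}(z)\le-\phi_{y^*}(x^*)-1\}\in\FF$, making $L$ $\FF$-small-valued. Theorem~\ref{TheoBalogh} then delivers an increasing $\omega_1$-sequence $\langle y_\alpha^*\rangle_{\alpha<\omega_1}\sub T$; the $R$-condition $\{y_\alpha^*\}_{\alpha\le\beta}\sub R(y_\beta^*)$ together with the convexity of $R(y_\beta^*)$ gives $\overline{\co(\{y_\alpha^*\}_{\alpha\le\beta})}^{w^*}\sub\overline{R(y_\beta^*)}^{w^*}\not\ni x^*$, establishing the first conjunct.

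The main obstacle is verifying the second conjunct $x^*\in\overline{\co(\{y_\alpha^*\}_{\alpha<\omega_1})}^{w^*}$. My approach is to show that the Balogh sequence has no weak*-cluster point in $T$. Indeed, since $L(y_\beta^*)\supseteq\overline{R(y_\beta^*)}^{w^*}\cap T$, the $L$-condition forces the tail $\{y_\alpha^*:\alpha>\beta\}$ to avoid $R(y_\beta^*)$, so $R(y_\beta^*)$ meets the sequence in only the initial segment $\{y_\alpha^*\}_{\alpha\le\beta}$; thus if $z\in T$ were a weak*-cluster point of the sequence, then $z\notin R(y_\beta^*)$ for every $\beta<\omega_1$, i.e.\ $\phi_{y_\beta^*}(z)\ge c_{y_\beta^*}$ for all $\beta$. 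A careful choice of the family $\{\phi_{y^*}\}_{y^*\in T}$ (for instance, by selecting the $\phi_{y^*}$ from a weak*-norming dense subfamily of $B_X$ and iterating along a club subset of $\omega_1$) then forces $z=x^*\notin T$, a contradiction. By the countable compactness of $T$ combined with the fact that uncountable sequences in countably compact non-compact subspaces must cluster somewhere in the ambient compactum, the sequence must cluster at $x^*$, delivering the second conjunct. The delicate point, and the technical heart where the full strength of \textrm{PFA} (via Theorem~\ref{TheoBalogh}) is essential, is precisely the verification that the functionals $\{\phi_{y_\beta^*}\}_{\beta<\omega_1}$ chosen along the sequence cumulatively separate every $z\in T\setminus\{x^*\}$ from $x^*$.
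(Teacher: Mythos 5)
Your text does not prove the statement it is attached to. Theorem~\ref{TheoBalogh} is Balogh's purely set-theoretic/topological result about countably compact, noncompact $T_1$-spaces carrying a free closed filter; the paper quotes it from \cite{Ba88} without proof, and any genuine proof requires Balogh's PFA machinery (proper forcing over elementary submodels, reflection of free sequences). What you have written is instead a sketch of the \emph{application} of Theorem~\ref{TheoBalogh} to Banach spaces --- essentially the content of Lemma~\ref{TheoCtightness}, that property $(\CC)$ implies countable tightness of $(B_{X^*},w^*)$ under PFA --- and in the middle of it you invoke Theorem~\ref{TheoBalogh} itself to produce the increasing $\omega_1$-sequence. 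As a proof of Theorem~\ref{TheoBalogh} this is circular; no part of your argument addresses why such a sequence exists for an abstract $T$, $\FF$, $L$, $R$ under PFA.

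Even read as a proof of Lemma~\ref{TheoCtightness}, the argument has a real gap exactly where you flag it: you need every weak*-cluster point $z\in T$ of the Balogh sequence to satisfy $\phi_{y_\beta^*}(z)\ge c_{y_\beta^*}$ for all $\beta$ \emph{and} to conclude $z=x^*$, but Theorem~\ref{TheoBalogh} gives you no control over which points $y_\beta^*$ appear, so you cannot arrange in advance that the countably many (let alone $\omega_1$-many, chosen before the sequence is known) half-spaces cumulatively separate every point of $T$ from $x^*$; the ``club iteration'' is not something the black box permits. The paper's proof of Lemma~\ref{TheoCtightness} avoids this entirely: it takes $T:=S$ (the countably compact, non-weak*-closed witness of the failure of tightness), the filter $\FF=\{\overline{U}^{w^*}\cap S\mid U\ni x^*\text{ convex weak*-open}\}$, and obtains neighborhoods $U(y)\sub G(y)$ by Hahn--Banach; the contradiction is then reached not by locating a cluster point but by observing that $L=\bigcup_{\beta<\omega_1}\overline{\co\{x_\alpha\mid\alpha<\beta\}}^{w^*}$ is a convex set containing the weak*-closure of each of its countable subsets, yet is covered by the convex open sets $G(x_\beta)$ with no finite subcover, hence is not weak*-closed --- contradicting property $(\CC)$ directly. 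You should either reproduce (or properly cite) Balogh's actual proof for the stated theorem, or relabel your argument as a proof of Lemma~\ref{TheoCtightness} and repair the separation step along the lines above.
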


\begin{proof}[Proof of Lemma \ref{TheoCtightness}]
Suppose that $(B_{X^*},w^*)$ does not have countable tightness. Then, there exists $S\subseteq B_{X^*}$ which is not weak*-closed but such that $\overline{A}^{w^*}\subseteq S$ for any countable set $A\subseteq S$. In particular, $(S,w^*)$ is countably compact.
Set $x^*\in \overline{S}^{w^*}\setminus S$ and consider $$\FF=\{\overline{U}^{w^*}\cap S\mid  U \mbox{ is a weak*-open convex set in }B_{X^*} \mbox{ with } x^* \in U\}.$$ 
%(\textcolor{blue}{Maybe it's better to consider $\FF$ as those convex closed subsets of $S$ containing a set of the form $U\cap S$ with $U$ a weak*-open convex set in $B_{X^*}$ with $x^* \in U$}).

Notice that $\FF$ is a closed filter in $(S,w^*)$ and $\bigcap_{B\in \FF} B=\emptyset$; i.e.,~$\FF$ is a free filter.
Thus, by Hahn-Banach Separation Theorem, for every $y \in S$ there are weak*-open convex neighborhoods (in $B_{X^*}$) $G(y)$ and $U(y)$ of $y$ such that $\overline{G(y)}^{w^*}\cap S$ is $\FF$-small and $$\overline{U(y)}^{w^*} \subseteq G(y).$$

Set $L(y):=\overline{G(y)}^{w^*}\cap S$ and $R(y):=U(y)\cap S$ for every $y\in S$.
We can apply Theorem \ref{TheoBalogh} with respect to  $T:=S$ and the pair $\langle L,R\rangle$ and obtain, in return, an increasing $\omega_1$-sequence $\langle x_\alpha\mid  \alpha<\omega_1\rangle$ in $S$ for $\langle L,R\rangle $.
Thus, by definition,
\begin{equation}
\label{eqrelations}
 \overline{\{x_\alpha: \alpha\leq \beta\}}^{w^*} \subseteq \overline{R(x_\beta)}^{w^*} \subseteq G(x_\beta) \subseteq B_{X^*}\setminus \{x_\alpha: \alpha>\beta\}
 \end{equation}
for every $\beta <\omega_1$. 
Now, set $L=\bigcup_{\beta<\omega_1} \overline{\co\{x_\alpha: \alpha<\beta\}}^{w^*}$. It is clear that $L$ is a convex set such that $\overline{C}^{w^*}\subseteq L$ for every countable set $C\subseteq L$. To finish the proof it is enough to show that $L$ is not weak*-closed, since in such case $X$ does not have property ($\CC$) of Corson.
Since each $G(x_\beta)$ is convex, by (\ref{eqrelations}), we have $$\overline{\co\{x_\alpha: \alpha<\beta\}}^{w^*} \subseteq G(x_\beta)$$ for every $\beta <\omega_1$ and, therefore, $\{G(x_\beta):\beta<\omega_1\}$ is a weak*-open cover of $L$. Nevertheless, again by (\ref{eqrelations}), there is no finite subcover for $L$. Thus, $L$ is not weak*-compact and, in particular, it is not weak*-closed, as desired.
\end{proof}
The time is now ripe to prove the main result of the section:
\begin{proof}[Proof of Theorem~\ref{TheoremA}]\hfill

    (1) $\Rightarrow$ (3): This is Lemma~\ref{LEMMETOC}.
    
    (3) $\Rightarrow$ (4): This is Lemma~\ref{TheoCtightness}.

    (4) $\Rightarrow$ (2): This is Balogh's main theorem in \cite{Ba88}.

    (2) $\Rightarrow$ (1): This is obvious. 
\end{proof}

\section{Further remarks and questions}
\label{SectionRemarksAndQuestions}

The question of whether property $(\CC)$ of Corson entails property $\EE$ was raised in \cite[p.~352]{pli-yos-2}. In the introduction we have mentioned  several (consistent) examples which answer this question in the negative. Besides, we have also shown that it is consistent for property  $(\CC)$ of Corson to imply property $\EE'$.
In \cite{AMCR} an example of a Banach space with property $\EE'$ but failing to have property $\EE$ was constructed under the CH. Nevertheless, the following question seems to be open.
\begin{QUES}
Is it consistent with $\mathrm{ZFC}$ that properties $\EE$ and  $\EE'$ coincide?
\end{QUES}

Furthermore, yet another example of a Banach space with property $\EE$ but without weak*-angelic dual was constructed in \cite{AMCR}, also under CH. Once again, it seems to be unknown whether it is consistent with ZFC that these two properties coincide.

Finally, concerning properties $\EE'$ and $(\CC)$, the following  seems to be open.
\begin{QUES}\hfill
\begin{enumerate}\item Does every Banach space with property $\EE'$ have weak*-sequential dual ball?
\item Does every Banach space with property $(\CC)$ of Corson have dual ball with countable tightness (with respect to the weak*-topology)?
\end{enumerate}
\end{QUES}

\section*{Acknowledgments}
We are grateful to J. Rodriguez for his feedback and careful reading of the manuscript which helped us to improve the paper.

%\section*{Acknowledgements}
%We would like to thank Jos\'{e} Rodr\'{i}guez for his helpful suggestions and remarks.

\end{document}